\newenvironment{graph}[1][scale=1]{
\begin{tikzpicture}[#1]
\tikzstyle{vertex}=[circle, draw, fill, inner sep=0pt, minimum size=4pt]%
\tikzstyle{every path}=[line width=1pt]%
\tikzstyle{G}=[dashed]%
\tikzstyle{F}=[solid]
}{\end{tikzpicture}}
\newtheorem{theorem}{Theorem}
\newtheorem{lemma}[theorem]{Lemma}
\newtheorem{cor}[theorem]{Corollary}
\newtheorem{question}[theorem]{Question}
\newtheorem*{definition}{Definition}
\renewcommand{\emptyset}{\varnothing}
\renewcommand\footnotemark{}
\begin{document}

\title{A Generalization of the Graph Packing Theorems of Sauer-Spencer and Brandt}

\author{Hemanshu Kaul \and Benjamin Reiniger}
\thanks{Department of Applied Mathematics, Illinois Institute of Technology, Chicago, IL 60616. Email: \texttt{kaul@iit.edu, ben.reiniger-math@yahoo.com}.}

\date{}

\maketitle

\begin{abstract}
We prove a common generalization of the celebrated Sauer-Spencer packing theorem and a theorem of Brandt concerning finding a copy of a tree inside a graph. This proof leads to the characterization of the extremal graphs in the case of Brandt's theorem: If $G$ is a graph and $F$ is a forest, both on $n$ vertices, and $3\Delta(G)+\ell^*(F)\leq n$, then $G$ and $F$ pack unless $n$ is even, $G=\frac{n}{2}K_2$ and $F=K_{1,n-1}$; where $\ell^*(F)$ is the difference between the number of leaves and twice the number of nontrivial components of $F$.




\end{abstract}

\section{Introduction}

Given two graphs $G$ and $H$ both on $n$ vertices, we say that $G$ and $H$ \emph{pack} if there is a bijection $f: V(G)\to V(H)$ such that for every $uv\in E(G)$, $f(u)f(v)\notin E(H)$; in other words, edge-disjoint copies of $G$ and $H$ can be found in $K_n$, or equivalently, $G$ is
isomorphic to a subgraph of the complement of  $H$. This concept leads to a natural generalization of a number of problems in extremal graph theory, such as existence of a fixed subgraph, equitable colorings, and Turan-type problems. The study of packing of graphs was started in the 1970s by Bollob\'{a}s and Eldridge~\cite{BE}, Sauer and Spencer~\cite{SS}, and Catlin~\cite{C1}. See the surveys by Kierstead et al.~\cite{KKY2009}, Wozniak~\cite{W}, and Yap~\cite{Y} for later developments in this field. In the following, we will use $\Delta(G)$ ($\delta(G)$) to denote the maximum (resp., minimum) degree of a graph $G$.

The major conjecture in graph packing is that of Bollob\'as and Eldridge~\cite{BE}, and independently by Catlin~\cite{C2}, from 1978, that $(\Delta(G)+1)(\Delta(H)+1)\leq n+1$ is sufficient for $G$ and $H$ to pack. Some partial results are known, e.g.~\cite{Eaton, KKY, AB, AF, Csaba, CSS, BK2016}. 

In 1978, Sauer and Spencer proved the following celebrated result.
\begin{theorem}[Sauer, Spencer~\cite{SS}]
Let $G,H$ be graphs on $n$ vertices such that $2\Delta(G)\Delta(H)<n$.  Then $G$ and $H$ pack.
\end{theorem}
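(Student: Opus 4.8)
The plan is to identify the vertex sets $V(G)$ and $V(H)$ with a common ground set $[n]$ and to search for a permutation $\pi$ of $[n]$ under which the relabelled graph $\pi G$, with edge set $\{\pi(u)\pi(v) : uv \in E(G)\}$, becomes edge-disjoint from $H$; such a $\pi$ is exactly a packing. For a permutation $\pi$ I set $c(\pi) = |E(\pi G) \cap E(H)|$, the number of \emph{conflicts}, and I aim to prove $\min_\pi c(\pi) = 0$.

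First I would fix a permutation minimizing $c(\pi)$ and, after replacing $G$ by $\pi G$, assume $\pi$ is the identity, so that $G$ and $H$ live on $[n]$ and share $m = c(\mathrm{id}) \ge 1$ edges, with no relabelling doing better. I pick a conflict edge $ab \in E(G)\cap E(H)$ and study the transpositions $\tau_c = (a\ c)$ for $c \ne a$. Only edges of $G$ meeting $a$ or $c$ move, and the edge $ac$ (if present) is fixed, so writing $\delta(c) = c(\tau_c) - m$ I would record
\begin{align*}
\delta(c) &= \big|\{w : aw\in E(G),\, cw\in E(H)\}\big| + \big|\{w: cw\in E(G),\, aw\in E(H)\}\big| \\
&\quad - \big|N_G(a)\cap N_H(a)\setminus\{c\}\big| - \big|N_G(c)\cap N_H(c)\setminus\{a\}\big|,
\end{align*}
where all $w$ range over $[n]\setminus\{a,c\}$; the two positive terms count conflicts created by the swap and the two negative terms count conflicts destroyed.

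The heart of the argument is an averaging step: I would sum $\delta(c)$ over all $c \ne a$ and show the total is negative, which forces some individual $\delta(c) \le -1$ and contradicts minimality. Switching the order of summation, the first positive term sums to $\sum_{w\in N_G(a)}\deg_H(w) - |N_G(a)\cap N_H(a)| \le \Delta(G)\Delta(H) - k$, and the second to at most $\Delta(G)\Delta(H) - k$, where $k = |N_G(a)\cap N_H(a)| \ge 1$ since $ab$ is a conflict. Meanwhile the destruction terms sum to at least $k(n-2)$, arising from the $n-2$ choices of $c$ that still witness the conflict $ab$. Combining,
\[
\sum_{c\ne a}\delta(c) \le \big(2\Delta(G)\Delta(H) - 2k\big) - k(n-2) \le 2\Delta(G)\Delta(H) - n < 0,
\]
using $k\ge 1$ in the middle step (which reduces to $kn \ge n$) and the hypothesis $2\Delta(G)\Delta(H) < n$ at the end.

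I expect the main obstacle to be the exact bookkeeping in this final display: the two subtracted copies of $k$, which arise because the target $c = a$ must be discarded whenever a would-be new conflict lands on an existing $H$-edge at $a$, are precisely what defeats an apparent off-by-one and lets the clean hypothesis $2\Delta(G)\Delta(H)<n$ suffice, rather than something weaker like $2\Delta(G)\Delta(H) < n-2$. Care is also needed to check that the transposition accounting handles the fixed edge $ac$, common $G$-neighbours of $a$ and $c$, and the excluded choice $c=b$ without double counting; once that is verified, the strict negativity of the sum supplies the improving transposition and completes the contradiction.
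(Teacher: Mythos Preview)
Your argument is correct: the averaging over all transpositions $(a\ c)$ and the bookkeeping of created versus destroyed conflicts goes through, and the inequality $\sum_{c\neq a}\delta(c)\le 2\Delta(G)\Delta(H)-kn<0$ yields an improving swap, contradicting minimality.

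The paper's route is closely related but set up differently. Rather than minimizing the total number of conflicts, the paper takes $H$ edge-minimal among non-packing partners of $G$, which immediately produces a \emph{quasipacking}: a bijection with exactly one conflicting edge $ux$. Then for every $y\notin\{u,x\}$ the $uy$-swap cannot be a packing, so there is at least one $uy$-link (a $P_3$ with one $G$-edge and one $H$-edge); together with the two trivial $uu$-links this gives at least $n$ links from $u$, while the number of $GH$-links plus $HG$-links is at most $\sum_{y\in N_G(u)}\deg_H(y)+\sum_{z\in N_H(u)}\deg_G(z)\le 2\Delta(G)\Delta(H)$. The underlying counting is the same as yours --- your ``created conflicts'' are exactly the paper's links --- but the quasipacking framing replaces your averaging inequality by a direct per-vertex lower bound. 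The payoff of the paper's version is that with a single conflict one can track \emph{which} vertex fails to have a link and read off structural consequences when equality holds; this is exactly what drives the extremal analysis for Brandt's theorem later in the paper, something your averaging formulation would not expose as cleanly.
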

Kaul and Kostochka~\cite{KK} strengthened the result by characterizing the extremal graphs: if $2\Delta(G)\Delta(H)=n$ and $G$ and $H$ fail to pack, then $n$ is even, one of the graphs is $\frac n2 K_2$, and the other is either $K_{n/2,n/2}$ (with $n/2$ odd) or contains $K_{n/2+1}$.

Let $\ell(F)$ denote the number of leaves in a forest $F$.  In 1994, Brandt~\cite{Brandt} proved that if $G$ is a graph and $T$ is a tree, both on $n$ vertices, and $\ell(T)\leq 3\delta(G)-2n+4$, then $G$ contains a copy of $T$.
This can be rephrased in terms of packing.
\begin{theorem}[Brandt~\cite{Brandt}]
If $G$ is a graph and $T$ is a tree, both on $n$ vertices, and
\[ 3\Delta(G)+\ell(T)-2<n, \]
then $G$ and $T$ pack.
\end{theorem}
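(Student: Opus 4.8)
The plan is to prove the packing statement directly by the extremal‑placement method, since that route is the one that also yields the extremal characterization promised in the abstract. Because $G$ and $T$ have the same $n$ vertices, a packing is a bijection $\pi\colon V(T)\to V(G)$, and I call a tree edge $xy$ a \emph{conflict} if $\pi(x)\pi(y)\in E(G)$; a conflict‑free $\pi$ is exactly a packing. Assuming no conflict‑free $\pi$ exists, I fix a bijection minimizing the number of conflicts $q(\pi)\ge 1$, and among all minimizers I impose a secondary optimization (for example, preferring placements whose conflicts sit as close to the leaves of $T$ as possible, or minimizing the total $G$-degree at the conflicting vertices). The aim is to produce a local modification of $\pi$ that strictly lowers $q$, contradicting minimality.

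To exploit that $T$ is a tree I root it at an internal vertex and orient each edge toward the root, so every conflict edge $xy$ has a well‑defined child endpoint $y$, to which I charge the conflict. Fix such a conflict, write $a=\pi(y)$ and $b=\pi(x)$ for the parent image, so $ab\in E(G)$. The basic move is a swap: pick another node $z$ with $c=\pi(z)$ and interchange images, putting $a$ at $z$ and $c$ at $y$. The charged conflict vanishes provided $c$ is $G$-nonadjacent to $b$ and to the images of $y$'s children, and no fresh conflict appears at $z$ provided $a$ is $G$-nonadjacent to the images of the tree‑neighbors of $z$. Here the leaves of $T$ are the flexible positions, since a leaf has a single tree‑neighbor and therefore imposes only one adjacency constraint.

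The counting of forbidden targets is the step I expect to be the \emph{main obstacle}. If one bounds the forbidden $z$ for a single free swap naively, the term coming from new conflicts at $z$ costs about $\sum_{g\in N_G(a)}\deg_T(\pi^{-1}(g))\le \Delta(G)\,\Delta(T)$, and the analogous term for the other endpoint gives another $\Delta(G)\,\Delta(T)$; this reproves exactly the Sauer--Spencer bound $2\Delta(G)\Delta(T)<n$, but for a tree $\Delta(T)$ can be as large as $n-1$, so it is useless. The entire difficulty of Brandt's theorem is to replace this \emph{product} bound by the \emph{additive} bound $3\Delta(G)+\ell(T)-2$, and the only way to do so is to use that a tree has just $\ell(T)$ leaves and otherwise a long, path‑like core. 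Concretely I would restrict the admissible moves to relocations into leaf positions together with short rotations that push a conflict outward along a tree path toward a leaf; along such a move every intermediate node contributes only the one adjacency constraint of a leaf, the $\ell(T)$ leaf positions serve as a reservoir of safe landing spots, and the three $\Delta(G)$ terms arise from the neighborhoods that must be avoided around $a$, around $b$, and along the pushed path. The secondary optimization on $\pi$ is chosen precisely so that this outward push cannot cycle and must terminate at a leaf, at which point the single remaining constraint is satisfiable whenever $3\Delta(G)+\ell(T)-2<n$.

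Finally, the extremal analysis should fall out of tightness: if $3\Delta(G)+\ell(T)-2=n$, then every inequality used to bound the forbidden targets must hold with equality, which rigidly pins down the local structure at each conflict and, propagated globally along the tree, should force $G=\tfrac n2 K_2$ and $T=K_{1,n-1}$ (with $\ell$ replaced by $\ell^*$ in the forest version). The residual difficulties I anticipate are (i) guaranteeing that the secondary optimization prevents the outward push from stalling at a high‑degree internal vertex, and (ii) treating the degenerate configurations — $T$ a star, or $G$ disconnected — where a single swap is inadequate and a genuine augmenting modification along the tree is required.
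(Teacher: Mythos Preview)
Your proposal is a strategy sketch with the central counting step explicitly left open, and the mechanism you propose for closing it---pushing conflicts outward along tree paths toward leaves, governed by a secondary optimization to prevent stalling---is not what makes the argument work. The paper's route is both different and considerably simpler, and it avoids all the ``residual difficulties'' you list.

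Rather than minimizing conflicts over bijections, the paper takes the tree to be \emph{edge-minimal} among non-packing partners of $G$. Deleting any edge $e$ of $T$ then yields a packing with $G$, hence a quasipacking with exactly \emph{one} conflicting edge, namely $e$; and $e$ may be chosen incident to a leaf $u'$ of $T$ (a vertex of minimum positive degree). From this single quasipacking one counts links: for every $y$, a $uy$-swap must create a conflict, so there is a $uy$-link, giving at least $n$ links from $u$. Because $u'$ is a leaf, the $TG$-links from $u$ number at most $\deg_G(x)\le\Delta(G)$; the $GT$-links number at most $\sum_{y\in N_G(u)}\deg_T(f(y))\le \sum_{i=1}^{\Delta(G)} d_i^{(T)} = 2\Delta(G)+\sum_{i=1}^{\Delta(G)}(d_i^{(T)}-2)\le 2\Delta(G)+\ell(T)-2$. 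Adding gives $n\le 3\Delta(G)+\ell(T)-2$. No iteration, no rotations, no secondary optimization---just one inequality at one well-chosen quasipacking. The three ideas you are missing are: (i) edge-minimality of $T$ instead of conflict-minimality of $\pi$, which buys a single conflict rather than many; (ii) placing that conflict at a leaf, which collapses the $TG$-direction to a single $\Delta(G)$; and (iii) the inequality $\sum_{i=1}^{\Delta(G)} d_i^{(T)}\le 2\Delta(G)+\ell(T)-2$, which is the actual source of the additive bound. Your heuristic for the three $\Delta(G)$ terms (``around $a$, around $b$, and along the pushed path'') does not match what happens and would not yield the bound as stated.
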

We need a generalization of this theorem to a forest $F$, which is straightforward and motivates the following definition.

\begin{definition}
The \emph{excess leaves} of a forest $F$, denoted $\ell^*(F)$, is $\sum_{v\in V(F)} \max\{d(v)-2, 0\}$.
\end{definition}
Note that linear forests are precisely the forests with zero excess leaves.  We also have that $\ell^*(F)$ equals the number of leaves of $F$ minus twice the number of nontrivial components of $F$ (those having at least two vertices), and that for a tree $T$, $\ell^*(T)=\ell(T)-2$.\footnote{For these, consider the sum $\sum_{i\geq0}(i-2)n_i$, where $n_i$ is the number of vertices with degree $i$.}

\begin{cor}\label{cor:brandt-forest}
If $G$ is a graph and $F$ is a forest, both on $n$ vertices, and $3\Delta(G)+\ell^*(T)<n$, then $G$ and $F$ pack.
\end{cor}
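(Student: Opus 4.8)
The plan is to deduce this from Brandt's theorem by enlarging $F$ to a spanning tree. The key observation is that packing is monotone under passing to supergraphs on the same vertex set: if $T$ is a tree with $V(T)=V(F)$ and $E(F)\subseteq E(T)$, and $G$ packs with $T$, then the very same bijection witnesses that $G$ packs with $F$, since every edge of $F$ is an edge of $T$. Recalling that $\ell^*(T)=\ell(T)-2$ for a tree, Brandt's theorem produces a packing of $G$ and $T$ as soon as $3\Delta(G)+\ell(T)-2=3\Delta(G)+\ell^*(T)<n$. Hence it suffices to construct a spanning tree $T\supseteq F$ with $\ell^*(T)\le\ell^*(F)$, i.e. with at most $\ell^*(F)+2$ leaves; the hypothesis $3\Delta(G)+\ell^*(F)<n$ then matches Brandt's hypothesis for $G$ and $T$.

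We may assume $F$ has at least one edge, since an edgeless forest packs with every graph via any bijection. Let $T_1,\dots,T_p$ be the nontrivial components of $F$ (so $p\ge1$), and suppose there are $q$ isolated vertices. First I would chain the nontrivial components into a single tree $T'$ on their common vertex set: each $T_i$ has at least two leaves, so choosing distinct leaves $a_i,b_i$ of $T_i$ I add the $p-1$ edges $b_1a_2,\,b_2a_3,\dots,b_{p-1}a_p$. This deletes no edge of $F$, and each added edge turns two former leaves into degree-$2$ vertices while creating no new leaf. Thus $\ell(T')=\ell(F)-2(p-1)$, and since $\ell^*(F)=\ell(F)-2p$ this gives $\ell(T')=\ell^*(F)+2$ (the case $p=1$ being the degenerate instance with no connecting edges).

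Next I would absorb the $q$ isolated vertices without disturbing the leaf count, using the elementary fact that appending a new vertex to an existing leaf keeps the number of leaves unchanged: the chosen leaf becomes an internal degree-$2$ vertex while the appended vertex becomes the new leaf. Concretely, I fix a leaf $a$ of $T'$ (one exists since $T'$ has at least two vertices) and hang the isolated vertices off $a$ as a pendant path $aw_1w_2\cdots w_q$. This operation only adds edges, so $F\subseteq T$, and the resulting spanning tree $T$ still has exactly $\ell^*(F)+2$ leaves, whence $\ell^*(T)=\ell^*(F)$. Applying Brandt's theorem to $G$ and $T$ then finishes the proof.

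I expect no serious obstacle here, in line with the remark that the generalization is straightforward; the entire content is the bookkeeping that the two moves—joining leaves of distinct components and appending a vertex to a leaf—preserve the leaf surplus $\ell^*$, together with the degenerate checks ($p=1$, $q=0$, and the edgeless forest). The single point I would state carefully is that both constructions only add edges to $F$, so that the packing of the enlarged tree $T$ with $G$ restricts to a packing of $F$ with $G$.
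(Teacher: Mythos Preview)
Your proposal is correct and follows essentially the same route as the paper: enlarge $F$ to a spanning tree by first joining leaves of distinct nontrivial components and then attaching the isolated vertices at a leaf, observe that both moves preserve $\ell^*$, and invoke Brandt's theorem on the resulting tree. Your write-up is more explicit about the bookkeeping and handles the edgeless-forest case separately, but the argument is the same.
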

\begin{proof}
Iteratively add edges joining leaves of distinct nontrivial components of $F$; each such addition does not change $\ell^*$.  When there is only one nontrivial component left, iteratively add edges from any leaf to the remaining (isolated) vertices; again $\ell^*$ is preserved.  Now we have a tree, for which $\ell^*=\ell-2$.  Brandt's theorem now applies, so that $G$ and the new tree pack, and deleting the added edges gives a packing of $G$ with $F$.
\end{proof}

Corollary~\ref{cor:brandt-forest} is sharp when  $n$ is even, with $G=\frac n2 K_2$ and $F=K_{1,n-1}$.  We will prove that this is the only pair of extremal graphs, strengthening Brandt's result as follows.
\begin{theorem}\label{thm:onlysharpness}
If $G$ is a graph and $F$ is a forest, both on $n$ vertices, and
\[ 3\Delta(G)+\ell^*(F)\leq n,\]
then $G$ and $F$ pack unless $n$ is even, $G=\frac{n}{2}K_2$, and $F=K_{1,n-1}$.
\end{theorem}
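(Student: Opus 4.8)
The plan is to reduce to the boundary case and then run a minimum-conflict exchange argument in the spirit of Kaul--Kostochka's strengthening of Sauer--Spencer, but weighted so that the forest parameter $\ell^*$, rather than $\Delta(F)$, controls the cost of an exchange. First, if the inequality is strict then Corollary~\ref{cor:brandt-forest} already yields a packing, so I may assume $3\Delta(G)+\ell^*(F)=n$ and that $G$ and $F$ fail to pack. I then clear away degenerate configurations: if $F$ has no edges it packs trivially, and if $G$ (or $F$) has an isolated vertex one can place a leaf of $F$ (resp.\ a vertex of $G$) there and recurse on $n-1$ vertices, typically gaining strict inequality. Since neither $\frac n2 K_2$ nor $K_{1,n-1}$ has an isolated vertex, such a reduction never reaches the claimed exceptional pair, so I may assume $\delta(G)\geq 1$, $F$ has at least one edge, and aim to show that failure to pack forces $n$ even, $G=\frac n2 K_2$, and $F=K_{1,n-1}$.

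Among all bijections $\phi:V(G)\to V(F)$ I choose one minimizing the number of conflicts, a conflict being an edge $uv\in E(G)$ with $\phi(u)\phi(v)\in E(F)$; since $G$ and $F$ do not pack there is a conflict $u_0v_0$, with $x_0=\phi(u_0)$, $y_0=\phi(v_0)$, and $x_0y_0\in E(F)$. I try to destroy this conflict by exchanging the image of $u_0$ with that of another vertex $w$, an operation that only affects conflicts on edges of $G$ incident to $u_0$ or $w$. The number of vertices $w$ whose use either fails to remove the conflict or creates a compensating new one is controlled by a term of size $2\Delta(G)$ coming from the $G$-neighborhoods of $u_0$ and $w$, together with a term accounting for the high-degree vertices of $F$. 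The crucial move is to steer the exchange toward target images of degree at most $2$ in $F$, of which all but an $\ell^*(F)$-sized amount are available; this caps the number of obstructed choices by roughly $3\Delta(G)+\ell^*(F)-1<n$ unless every underlying inequality is met with equality. Any strictly improving exchange contradicts minimality, so failure to pack forces all of these counting bounds to be tight at once.

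The heart of the argument is then the tightness analysis. Equality in the $G$-neighborhood bounds forces the relevant vertices of $G$ to have degree exactly $\Delta(G)$ with their neighborhoods pinned to prescribed images in $F$; equality in the $F$-side term forces the degree excess of $F$ to concentrate in a single vertex, so that $F$ has exactly one vertex of degree $\geq 3$, whence (being a forest) a star; and iterating the exchange analysis from $u_0v_0$ across all of $V(G)$ forces $G$ to be $1$-regular. A $1$-regular $G$ requires $n$ even and gives $3\Delta(G)=3$, hence $\ell^*(F)=n-3$; combined with the single branch vertex this forces that vertex to have degree $n-1$, i.e.\ $F=K_{1,n-1}$. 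One checks directly (as in the discussion after Corollary~\ref{cor:brandt-forest}) that $\frac n2 K_2$ and $K_{1,n-1}$ genuinely fail to pack, closing the argument.

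I expect the main obstacle to be exactly this equality bookkeeping: extracting the global conclusion ``$G=\frac n2 K_2$ and $F=K_{1,n-1}$'' from the purely local statement ``no single exchange improves the conflict count.'' One must rule out mixed configurations in which $F$ has several moderate branch vertices or $G$ is irregular, which requires choosing the exchange partner $w$ carefully (favoring low-degree images in $F$ and vertices attaining $\Delta(G)$) and then propagating the local conclusions over the whole graph. A secondary technical point, already present in Brandt's original proof, is that the naive exchange cost is governed by $\Delta(F)$, which is hopeless for stars; replacing it by the averaged quantity $\ell^*(F)$ is what lets the counting reach the sharp threshold, and carrying this weighting faithfully through the tightness analysis is the delicate part.
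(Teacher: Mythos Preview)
Your outline follows a plausible Kaul--Kostochka template, but the decisive step---the tightness analysis---has a genuine gap, and the paper's proof shows why a purely local exchange argument does not suffice.

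The specific problem is the claim that ``equality in the $F$-side term forces the degree excess of $F$ to concentrate in a single vertex.'' The link-counting inequality, at equality, says only that every vertex of $F$ with degree at least $3$ lies in $\phi(N_G(u_0))$. Since $|N_G(u_0)|=\Delta(G)$, for $\Delta(G)\ge 2$ this permits several branch vertices in $F$, not one; nothing in a single-swap analysis excludes this. Likewise, ``iterating the exchange analysis across all of $V(G)$ forces $G$ to be $1$-regular'' is asserted rather than argued: local tightness at a conflict gives $\deg_G(u_0)=\deg_G(v_0)=\Delta(G)$, which pushes in the opposite direction. A further issue is that your minimum-conflict bijection need not put any conflict at a \emph{leaf} of $F$; without that, the $FG$-link side is not a single term $\deg_G(x)$ and the $\ell^*$ bookkeeping you describe does not get off the ground.

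The paper handles $\Delta(G)\ge 2$ by a different mechanism. Taking $F$ edge-minimal (rather than a minimum-conflict bijection) yields a quasipacking whose unique conflict sits at a leaf of $F$. From the tight link count one first extracts a structural lemma: $N_G[u]=N_G[x]$, and in fact $Q:=N_G[u]$ is a clique \emph{component} of $G$ of size $\Delta(G)+1$. This step genuinely uses that $F$ is a forest (an auxiliary induced subgraph would otherwise have minimum degree $\ge 2$). One then manufactures an $F$-independent set $X$ with $|X|=|Q|$ and reduces to packing $G-Q$ with $F-X$. That smaller instance is dispatched either by Dirac's Hamiltonicity criterion (when the key vertex has $F$-degree $\le 3$, so $F-X$ is a linear forest contained in a Hamiltonian cycle of $\overline{G-Q}$) or by the strict form of Theorem~\ref{thm:degSS} (when it has $F$-degree $\ge 4$, so enough excess is stripped off to make the inequality strict on $n-\Delta(G)-1$ vertices). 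These two external ingredients---Dirac and the degree-sequence packing theorem---are essential; they are not recoverable from the local swap bookkeeping you outline.
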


To accomplish this, we will first prove the following theorem, which generalizes both the Sauer-Spencer and Brandt packing theorems.
\begin{theorem}\label{thm:degSS}
Let $G$ be a graph and $H$ a $c$-degenerate graph, both on $n$ vertices.  Let $d_1^{(G)}\geq d_2^{(G)}\geq \dotsb \geq d_n^{(G)}$ be the degree sequence of $G$, and similarly for $H$.  If
\[ \sum_{i=1}^{\Delta(G)} d_i^{(H)} + \sum_{j=1}^{c} d_j^{(G)} < n, \]
then $G$ and $H$ pack.
\end{theorem}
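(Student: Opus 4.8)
The plan is to prove Theorem~\ref{thm:degSS} by the standard edge-counting/discharging argument that underlies Sauer--Spencer-type results, adapted to exploit degeneracy. I would proceed by contradiction: among all bijections $f:V(G)\to V(H)$, choose one that minimizes the number of \emph{conflicts}, where a conflict is an edge $uv\in E(G)$ with $f(u)f(v)\in E(H)$. If $G$ and $H$ do not pack, every bijection has at least one conflict, so the minimum number of conflicts is some $m\geq 1$; fix an optimal $f$ and a conflicting edge $uv$. The goal is to show that the hypothesis $\sum_{i=1}^{\Delta(G)} d_i^{(H)} + \sum_{j=1}^{c} d_j^{(G)} < n$ forces the existence of a modified bijection with strictly fewer conflicts, contradicting minimality.

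First I would set up the switching operation: given the conflicting edge $uv$, consider swapping the images $f(u)$ and $f(v)$, or more generally swapping $f(u)$ with $f(w)$ for various vertices $w$. For such a swap to fail to strictly decrease the conflict count, certain neighbors of $u$ (in $G$) or of $f(u),f(w)$ (in $H$) must be implicated in new conflicts. The heart of the argument is to bound the number of vertices $w$ that are \emph{bad} for $u$ in the sense that swapping $u$ with $w$ does not help. Here the two sum terms in the hypothesis should account for two separate sources of obstruction: swapping $f(u)$ into a new position can create at most $\sum_{j=1}^{c} d_j^{(G)}$ new conflicts coming from the $G$-edges at the $c$-degeneracy ordering's low-degree vertex, while the images interact with at most $\sum_{i=1}^{\Delta(G)} d_i^{(H)}$ edges on the $H$-side through the $\Delta(G)$ neighbors of a vertex in $G$. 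The key structural tool is the $c$-degeneracy of $H$: I would fix a degeneracy ordering of $H$ and, when analyzing a conflicting pair, focus on the endpoint that appears later in this ordering, so that it has at most $c$ back-neighbors in $H$, which is what lets the $\sum_{j=1}^c d_j^{(G)}$ term (rather than a full $\Delta(H)$-type term) enter the count.

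The counting step is where I would be most careful. I would try to show that the number of positions $w$ for which the swap of $u$ and $w$ fails is at most $\sum_{i=1}^{\Delta(G)} d_i^{(H)} + \sum_{j=1}^{c} d_j^{(G)} - 1$, using that the degrees appearing are the \emph{largest} $\Delta(G)$ degrees of $H$ and the \emph{largest} $c$ degrees of $G$ (so that bounding by the top of each sorted degree sequence is valid regardless of which specific vertices are involved). Since this bound is strictly less than $n-1$, and there are $n-1$ candidate vertices $w\neq u$, some valid swap exists, giving the contradiction. The sorting of the degree sequences and the choice to bound each contribution by the respective top-$k$ sum is precisely what makes the clean inequality in the statement the right hypothesis, and it is what generalizes the product $2\Delta(G)\Delta(H)$ of Sauer--Spencer (taking $c=\Delta(H)$) and the $3\Delta(G)$ of Brandt (exploiting that forests are $1$-degenerate, so $c=1$, together with control of the leaf/excess-leaf structure via the $H$-degree sum).

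The main obstacle I anticipate is making the switching analysis genuinely local and additive, so that the two obstruction sources do not double-count and the degeneracy ordering can be invoked consistently across all conflicting edges simultaneously. In particular, ensuring that the $c$ back-neighbors in the degeneracy ordering align with the vertices whose $G$-degrees enter the second sum, and that a single swap does not create a cascade of new conflicts exceeding the budget, will require a careful choice of which endpoint to move and possibly an auxiliary weighting or a potential-function refinement beyond the raw conflict count. I would expect to need a secondary tie-breaking criterion in the choice of optimal $f$ (for instance, minimizing conflicts and then maximizing some secondary quantity) to rule out the degenerate cases where a swap merely relocates a conflict without eliminating it.
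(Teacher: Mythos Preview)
Your plan has a genuine gap in the way degeneracy is used. You propose to take a minimum-conflict bijection, fix an arbitrary conflicting edge $uv$, and then invoke a degeneracy ordering of $H$ so that one endpoint, say $f(u)$, has at most $c$ \emph{back-neighbors}. But the obstruction count at $u$ involves \emph{all} $H$-neighbors of $f(u)$: an $HG$-link from $u$ is a path $u\text{--}z\text{--}y$ with $f(u)f(z)\in E(H)$ and $zy\in E(G)$, and nothing forces $f(z)$ to precede $f(u)$ in the ordering. So the number of $HG$-links is bounded by $\sum_{z'\in N_H(f(u))}\deg_G(f^{-1}(z'))$, a sum over $\deg_H(f(u))$ terms, not $c$ terms. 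Your back-neighbor idea therefore does not recover the term $\sum_{j=1}^{c} d_j^{(G)}$, and the hedges you mention (secondary tie-breaking, potential functions) do not address this; no refinement of the bijection choice will make a high-$H$-degree conflict vertex have only $c$ neighbors.

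The paper's device, which is the missing idea, is to replace the minimum-conflict setup by \emph{edge-minimality of $H$}: assume $H$ is edge-minimal subject to $G$ and $H$ not packing. Then for \emph{any} edge $e$ of $H$, the pair $(G,H-e)$ packs, giving a quasipacking of $G$ with $H$ whose unique conflict is exactly $e$. This freedom lets one \emph{choose} $e=u'x'$ with $u'$ a vertex of minimum positive degree in $H$; since $H$ is $c$-degenerate, $\deg_H(u')\le c$. Now, with $u=f^{-1}(u')$, every $y\notin\{u,x\}$ must admit a $uy$-link (else the $uy$-swap is a packing), and together with two self-links this gives at least $n$ links from $u$. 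The $GH$-links number at most $\sum_{y\in N_G(u)}\deg_H(f(y))\le\sum_{i=1}^{\Delta(G)} d_i^{(H)}$, and the $HG$-links number at most $\sum_{z'\in N_H(u')}\deg_G(f^{-1}(z'))\le\sum_{j=1}^{c} d_j^{(G)}$ because $|N_H(u')|\le c$. Hence $n\le \sum_{i=1}^{\Delta(G)} d_i^{(H)}+\sum_{j=1}^{c} d_j^{(G)}$, contradicting the hypothesis. The point is that edge-minimality lets you \emph{place} the single conflict at a vertex whose full $H$-degree is at most $c$; the minimum-conflict framework gives no such control.
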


This strengthens Sauer-Spencer, since
$c\leq \Delta(H)$.

This also strengthens Brandt's theorem: if $H$ is a tree, then $c=1$, so the second summation is just $\Delta(G)$.  For the first summation,
\[  \sum_{i=1}^{\Delta(G)}d_i^{(H)} = 2\Delta(G)+\sum_{i=1}^{\Delta(G)} \big(d_i^{(H)}-2\big) \leq 2\Delta(G)+\ell(H)-2. \]

It is easy to construct examples of graphs $G$ and $H$ for which conditions in the Sauer-Spencer theorem, Brandt's theorem, or even the  Bollob\'as, Eldridge, and Catlin conjecture are not true, but Theorem~\ref{thm:degSS} does apply.

The proof of Theorem~\ref{thm:degSS} generally follows that of Sauer-Spencer.  In the special setting of Brandt's theorem, the proof can be analyzed more closely to show that the only sharpness example is the one mentioned above.

Theorem~\ref{thm:degSS} is sharp itself, with several sharpness examples.
It retains all the Sauer-Spencer sharpness examples (with $n$ even) mentioned earlier:
\begin{itemize}
\item $H=\frac n2 K_2$ and $G\supseteq K_{n/2+1}$
\item $H=\frac n2 K_2$ and $G=K_{n/2,n/2}$, with $n/2$ odd
\item $H\supseteq K_{n/2+1}$ and $G=\frac n2 K_2$
\item $H=K_{n/2,n/2}$ and $G=\frac n2 K_2$, with $n/2$ odd
\end{itemize}
And it has an additional family of sharpness examples:
\begin{itemize}
\item $H=K_{s,n-s}$ and $G=\frac n2 K_2$, with $n$ even and $s$ odd\\
(in particular, $H=K_{1,n-1}$ and $G=\frac n2 K_2$)
\end{itemize}
We do not know whether these are all the sharpness examples, even if we restrict to the case that $H$ is a forest. 

\begin{question}\label{ques:extremaldegSS}
What are the extremal graphs for Theorem~\ref{thm:degSS}? Do the above listed families of graphs include all the extremal graphs for Theorem~\ref{thm:degSS} when $H$ is a forest? 
\end{question}

Note that Theorem~\ref{thm:onlysharpness} shows that the only extremal graphs for that theorem have $n$ even and $\Delta
(G)=1$. So, it is natural to ask:
\begin{question}\label{ques:extension}
By Theorem~\ref{thm:onlysharpness},  $3\Delta(G)+\ell^*(F)< n+1$ is a sufficient condition for packing of a graph $G$ and a forest $F$ on $n$ vertices when $n$ is odd or $\Delta(G) \ge 2$. Is this statement sharp? If yes, what are all its sharpness examples?
\end{question}

Degeneracy versions of the Sauer-Spencer packing theorem have been studied before, in~\cite{BKN} and~\cite{KKgame}. 
If we think of the condition in Sauer-Spencer as the sum of two terms: $\Delta(G)\Delta(H) + \Delta(H)\Delta(G) <n$, then Theorem~\ref{thm:degSS} can be thought of as replacing $\Delta(H)$ by the degeneracy $c(H)$ in one the terms (in addition to other degree sequence related improvements). The result in~\cite{BKN} replaces $\Delta(G)$ by $c(G)$ in one term and $\Delta(H)$ by $\log\Delta(H)$ in the other. In~\cite{KKgame}, $\Delta(G)$ is replaced by $(\operatorname{gcol}(G)-1)$ in one term and  $\Delta(H)$ by $(\operatorname{gcol}(H)-1)$ in the other, where $\operatorname{gcol}$ denotes the \emph{game coloring number} and $\operatorname{gcol}(G)-1$ lies in between the degeneracy and the maximum degree (see~\cite{KKgame} for precise definition and details). It is natural to ask for improvements or extensions of Theorem~\ref{thm:degSS} by considering degree-sum conditions that interplay between maximum degree, degeneracy, and game coloring number. For example, does $\sum_{i=1}^{gcol(G)-1} d_i^{(H)} + \sum_{j=1}^{gcol(H)-1} d_j^{(G)} < n$ suffice for a packing of $G$ and $H$ under the set-up of Theorem~\ref{thm:degSS}? Or, does $c_1 \sum_{i=1}^{\lceil\log\Delta(G)\rceil} d_i^{(H)} <n$ and  $c_2\sum_{j=1}^{c(H)} d_j^{(G)} < n$ for some fixed constants $c_1$ and $c_2$ suffice?

\section{Proofs}
Throughout, we think of a bijective mapping $f: V(G)\to V(H)$ as the multigraph with vertices $V(G)$ and edges labelled by ``$G$'' or ``$H$''.
We speak of $H$- and $G$-edges, $H$- and $G$-neighbors of vertices, and $H$-cliques, $H$-independent sets, etc.
A \emph{link} is a copy of $P_3$ with one $H$-edge and one $G$-edge, and a $uv$-link is a link with endpoints $u$ and $v$; a \emph{$GH$-link from $u$ to $v$} is a link with endpoints $u,v$ whose edge incident to $u$ is from $G$; similarly we have $HG$-links.  From a given mapping $f$, a \emph{$uv$-swap} results in a new mapping $f'$ with $f'(u)=f(v)$, $f'(v)=f(u)$, and $f'=f$ otherwise.  A \emph{quasipacking} of $G$ with $H$ is a mapping $f$ whose multigraph is simple except for a single pair of vertices joined by both an $H$-edge and a $G$-edge; this pair is called the \emph{conflicting edge} of the quasipacking.

Consider a pair of graphs $(G,H)$, with $H$ being $c$-degenerate, each on $n$ vertices, that do not pack; furthermore assume that $H$ is edge-minimal with this property.
Thus for any edge $e$ in $H$, $G$ and $H-e$ pack, and so there is a quasipacking of $H$ and $G$ with conflicting edge $e$.

Let $u'$ be a vertex of minimum positive degree in $H$, let $x'\in N_H(u)$, and consider a quasipacking $f$ of $G$ with $H$ with conflicting edge $u'x'$.  Let $u=f^{-1}(u')$ and $x=f^{-1}(x')$.
We will now consider the set of links from $u$ to each vertex.

Consider a $y\in V(G)\setminus\{u,x\}$.  Perform a $uy$-swap: since $G$ and $H$ do not pack, there must be some conflicting edge, and such a conflict must involve an $H$-edge incident to either $u$ or $y$; together with the conflicting $G$-edge, we have a $uy$-link in the original quasipacking.
There are two links from $u$ to itself, using the parallel edges $ux$ in each order.
Thus there are at least $n$ links from $u$ in the original quasipacking $f$.

The number of $GH$-links from $u$ is at most $\sum_{y\in N_G(u)} \deg_H(f(y))$.  The number of $HG$-links from $u$ is at most $\sum_{z'\in N_H(u')} \deg_G(f^{-1}(z'))$.  Hence we have
\begin{equation}\label{eq:GH-links}
n \leq \text{\# links from $u$} \leq
  \sum_{y\in N_G(u)} \deg_H(f(y)) + \sum_{z'\in N_H(u')} \deg_G(f^{-1}(z'))
   \leq \sum_{i=1}^{\Delta(G)} d^{(H)}_i + \sum_{j=1}^{c} d^{(G)}_j.
\end{equation}
This establishes Theorem~\ref{thm:degSS}.\\

To prove Theorem~\ref{thm:onlysharpness}, suppose additionally that $H$ is a forest, henceforth called $F$, and that $3\Delta(G)+\ell^*(F)=n$.  (So, we still assume that $G$ and $F$ do not pack, and that $F$ is edge-minimal with this property.)

If $\Delta(G)=1$, then it is easy to show that $n$ is even, $G=\frac{n}{2} K_2$, and $F=K_{1,n-1}$.  (In fact, such a $G$ will pack with any bipartite graph that is not complete bipartite.)  So we henceforth assume that $\Delta(G)>1$, and seek a contradiction.

\begin{lemma}\label{lemma}
For any leaf $u'$ of $F$ and $x'$ its neighbor, and a quasipacking $f$ of $G$ with $F$ with $f(u)=u'$ and $f(x)=x'$ and conflicting edge $ux$, we have the following.
\begin{enumerate}[1.]
\item\label{property:links} For every $y\in V(G)\setminus\{u,x\}$, there is a unique link from $u$ to $y$; there is no link from $u$ to $x$; and there are two links from $u$ to itself.
\item\label{property:edegs} $\deg_G(x)=\deg_G(u)=\Delta(G)$.
\item\label{property:Flargedegs} For every $w\in N_G(u)$, $\deg_F(f(w))\geq2$.
\item\label{property:Fsmalldegs} For every $w\notin N_G(u)$, $\deg_F(f(w))\leq 2$.
\end{enumerate}
\end{lemma}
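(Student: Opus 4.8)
The plan is to exploit the fact that, in the extremal regime $3\Delta(G)+\ell^*(F)=n$, the chain of inequalities~\eqref{eq:GH-links} that was used to prove Theorem~\ref{thm:degSS} must collapse into a chain of \emph{equalities}; each of the four properties is then read off from the equality in a different link of that chain.

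First I would specialize~\eqref{eq:GH-links} to the present setting. Since $F$ is $1$-degenerate we take $c=1$, and since $u'$ is a leaf its unique $F$-neighbor is $x'=f(x)$, so the second sum in~\eqref{eq:GH-links} collapses to the single term $\deg_G(x)$, giving
\[
 n \leq \#\{\text{links from }u\} \leq \sum_{w\in N_G(u)}\deg_F(f(w)) + \deg_G(x) \leq \sum_{i=1}^{\Delta(G)} d_i^{(F)} + \Delta(G).
\]
Next I would write each top degree as $d_i^{(F)}=2+(d_i^{(F)}-2)$ and use $\sum_{i=1}^{\Delta(G)}(d_i^{(F)}-2)\leq\sum_{v:\,d(v)>2}(d(v)-2)=\ell^*(F)$ to obtain $\sum_{i=1}^{\Delta(G)}d_i^{(F)}\leq 2\Delta(G)+\ell^*(F)$. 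Substituting, the final bound is at most $3\Delta(G)+\ell^*(F)=n$, so every inequality displayed above is in fact an equality; in particular the equality in the $\ell^*$ bound forces all vertices of $F$-degree exceeding $2$ to lie among the $\Delta(G)$ largest degrees and forces $d_{\Delta(G)}^{(F)}\geq2$ (hence $d_{\Delta(G)+1}^{(F)}\leq2$).

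I would then extract the easier properties. For Property~\ref{property:links}, I decompose the links from $u$ by their other endpoint: there are exactly two self-links (since $ux$ is the only conflicting edge, the two orderings of $u$–$x$–$u$), at least one link to each $y\in V(G)\setminus\{u,x\}$ (established in the proof of Theorem~\ref{thm:degSS} via the $uy$-swap), and at least zero to $x$, so the total is at least $(n-2)+2=n$; equality with $n$ then forces each such $y$ to have exactly one link and forces no link from $u$ to $x$. For Property~\ref{property:edegs}, equality in the last step of the display gives $\deg_G(x)=\Delta(G)$ directly, while equality $\sum_{w\in N_G(u)}\deg_F(f(w))=\sum_{i=1}^{\Delta(G)}d_i^{(F)}$ forces $\deg_G(u)=\Delta(G)$: a sum over fewer than $\Delta(G)$ vertices could not reach the right-hand side, because each of the top $\Delta(G)$ degrees is at least $d_{\Delta(G)}^{(F)}\geq2>0$.

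The heart of the matter, and the step I expect to require the most care, is converting the single scalar equality $\sum_{w\in N_G(u)}\deg_F(f(w))=\sum_{i=1}^{\Delta(G)}d_i^{(F)}$ into the vertex-level statements of Properties~\ref{property:Flargedegs} and~\ref{property:Fsmalldegs}. Writing $S=\{f(w):w\in N_G(u)\}$, which has exactly $\Delta(G)$ elements, this equality says that $S$ attains the maximum possible degree-sum over all $\Delta(G)$-subsets of $V(F)$. I would run an exchange argument: if some vertex of degree greater than $2$ lay outside $S$, then (since at most $\Delta(G)$ vertices have degree exceeding $2$) $S$ would contain a vertex of smaller degree that could be swapped out to strictly increase the sum, a contradiction; the same swap, using $d_{\Delta(G)}^{(F)}\geq2$, shows $S$ contains no vertex of degree less than $2$. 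Hence every member of $S$ has degree at least $2$ (Property~\ref{property:Flargedegs}) and every vertex of degree greater than $2$ lies in $S$, so any $w\notin N_G(u)$ maps outside $S$ and therefore has $\deg_F(f(w))\leq2$ (Property~\ref{property:Fsmalldegs}). The delicate points throughout are handling ties at the threshold degree $d_{\Delta(G)}^{(F)}$ in this exchange argument and consistently tracking the distinguished vertices $u$ (a leaf, lying in no $G$-neighborhood) and $x$ when decomposing the link count.
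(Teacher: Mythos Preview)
Your proof is correct and follows the same overall strategy as the paper: the chain of inequalities starting from~\eqref{eq:GH-links} is squeezed between $n$ and $3\Delta(G)+\ell^*(F)=n$, and the four properties are read off from the resulting equalities. The difference lies in which chain you use. You pass through the intermediate quantity $\sum_{i=1}^{\Delta(G)} d_i^{(F)}$, whereas the paper never does; instead it writes
\[
\sum_{y\in N_G(u)}\deg_F(f(y))+\deg_G(x)\ \le\ \sum_{y\in N_G(u)}\bigl(\deg_F(f(y))-2\bigr)+2\Delta(G)+\Delta(G)\ \le\ \sum_{y\in N_G(u)}\max\{\deg_F(f(y))-2,0\}+3\Delta(G)\ \le\ \ell^*(F)+3\Delta(G),
\]
so that Properties~\ref{property:edegs},~\ref{property:Flargedegs},~\ref{property:Fsmalldegs} fall out immediately from equality in each successive step (e.g.\ equality between the second and third expressions forces $\deg_F(f(y))-2\ge 0$ for every $y\in N_G(u)$, which is exactly Property~\ref{property:Flargedegs}). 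Your route through the top-$\Delta(G)$ degree sum is sound, but it forces you to run a separate exchange argument to recover the vertex-level statements~\ref{property:Flargedegs} and~\ref{property:Fsmalldegs} from the scalar equality $\sum_{w\in N_G(u)}\deg_F(f(w))=\sum_{i=1}^{\Delta(G)} d_i^{(F)}$; the paper's chain sidesteps this entirely. Both approaches give the same conclusions with comparable effort, but the paper's is a bit more streamlined for this particular lemma.
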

\begin{proof}
Note that we now have $\deg_F(u')=1$, so $\sum_{z'\in N_F(u')}\deg_G(f^{-1}(z')) = \deg_G(x)$.
In this case we can expand on (\ref{eq:GH-links}):
\begin{align}
 n \leq \text{\# links from $u$} &\leq \sum_{y\in N_G(u)} \deg_F(f(y)) + \deg_G(x) \\
 &\leq \sum_{y\in N_G(u)} \big(\deg_F(f(y))-2\,\big) + 2\Delta(G) + \Delta(G) \\
 &\leq \sum_{y\in N_G(u)} \max\{\deg_F(f(y)) -2 , 0\} + 3\Delta(G) \\
 &\leq \sum_{i=1}^n \max\{ d^{(F)}_i -2, 0\} + 3\Delta(G)
             = \ell^*(F) + 3\Delta(G) = n,
\end{align}
so we have equality throughout.
Conclusion $i$ follows from having equality in line ($i+1$) above, for $i\in[4]$.
\end{proof}

For a vertex $v$ in a graph $H$, we write $N_H[v]$ for the closed neighborhood, i.e. $N_H(v)\cup\{v\}$.  For a set $S$ of vertices, $N_H(S)=\bigcup_{v\in S} N_H(v) -S$.

\begin{lemma}\label{lemma-parts}
For any leaf $u'$ of $F$ and $x'$ its neighbor, and a quasipacking $f$ of $G$ with $F$ with $f(u)=u'$ and $f(x)=x'$ and conflicting edge $ux$, we have the following.
\begin{enumerate}[1.]
\item \label{lemma-partition}
$N_G[u]=N_G[x]$.
\item \label{lemma-clique}
Let $Q=N_G[u]$.  Then $G[Q]$ is a clique component.
\end{enumerate}
\end{lemma}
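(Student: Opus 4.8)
The plan is to exploit the heavy machinery of Lemma~\ref{lemma}, which tells us that in the extremal situation every inequality in the link-counting argument is actually an equality. First I would prove Part~\ref{lemma-partition}, that $N_G[u]=N_G[x]$. The natural approach is symmetry together with a swap argument. Since $\deg_G(u)=\deg_G(x)=\Delta(G)$ by Lemma~\ref{lemma}(\ref{property:edegs}), and $ux$ is a $G$-edge (the conflicting edge), it suffices to show $N_G(u)\setminus\{x\}=N_G(x)\setminus\{u\}$. I would take a vertex $w\in N_G(u)\setminus\{x\}$ and try to show $w\in N_G(x)$; the clean way is to perform the $ux$-swap, which turns the conflicting edge into a genuine (non-)edge and produces a new quasipacking (or forces a new conflict) whose conflicting edge is again incident to $u'$ or $x'$ in $F$, and then re-run the link count from the other endpoint. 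Because $u'$ is a leaf with unique neighbor $x'$, the roles of $u$ and $x$ are nearly interchangeable, and the equality conditions should force the two $G$-neighborhoods to coincide. The key leverage is Lemma~\ref{lemma}(\ref{property:links}): there is \emph{no} link from $u$ to $x$, which already says $x$ has no $G$-neighbor that is an $H$-neighbor of $u'$ and vice versa; combined with the exact degree equalities, the neighborhoods are pinned down.

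Next, for Part~\ref{lemma-clique}, I would set $Q=N_G[u]=N_G[x]$ and argue $G[Q]$ is a clique with no edges leaving $Q$. To see there are no edges leaving $Q$: every vertex of $Q$ has $G$-degree at most $\Delta(G)=|N_G(u)|$, and if $w\in N_G(u)$ had a neighbor outside $Q=N_G[u]$ it would have degree exceeding the count of its neighbors inside $Q$, contradicting that $\deg_G(w)\le\Delta(G)$ once we establish $N_G[w]\subseteq Q$. The cleanest route is again to observe that each $w\in N_G(u)$ can play the role of an endpoint in the link count: since Lemma~\ref{lemma}(\ref{property:Flargedegs}) forces $f(w)$ to have $F$-degree $\ge 2$ for every $G$-neighbor of $u$, and Lemma~\ref{lemma}(\ref{property:Fsmalldegs}) forces the complementary vertices to have $F$-degree $\le 2$, the extremal structure is rigid enough that the $G$-neighborhood of each vertex in $Q$ must again equal $Q$. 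Applying Part~\ref{lemma-partition} with $w$ in the role of $u$ (each such $w$ is mapped to an internal, hence high-degree, vertex of $F$, so the hypotheses transfer) should give $N_G[w]=Q$ for all $w\in Q$, which is exactly the statement that $G[Q]$ is a clique that is a connected component.

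The main obstacle I anticipate is the bootstrapping in Part~\ref{lemma-clique}: to re-apply Lemma~\ref{lemma} (and Part~\ref{lemma-partition}) with some $w\in N_G(u)$ in place of $u$, I need $f(w)$ to be a leaf of $F$ with a quasipacking witnessing the conflict, but a priori $f(w)$ is an \emph{internal} vertex of $F$ (degree $\ge2$). So the lemma as stated for leaves does not directly transfer. I would resolve this by a separate swap/re-rooting argument: either produce a new quasipacking in which $w$ (or its image) occupies the leaf role, or prove directly that $N_G[w]=N_G[u]$ using the uniqueness-of-links condition Lemma~\ref{lemma}(\ref{property:links}) applied symmetrically. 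Concretely, for $w,w'\in N_G(u)$, the unique $uw$-link and the edge structure around $x$ should let me deduce $ww'\in E(G)$ by a swap that keeps the conflict localized, giving that $Q$ is a clique; and the degree bound $\deg_G(w)\le\Delta(G)=|Q|-1$ then forces $N_G(w)\subseteq Q$, i.e.\ no edges leave $Q$. Verifying that these swaps genuinely preserve the quasipacking property (a single conflicting edge) rather than creating multiple conflicts is the delicate bookkeeping I expect to be the crux.
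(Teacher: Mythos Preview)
Your plan for Part~\ref{lemma-clique} is essentially the paper's: perform a $uy$-swap for $y\in Q\setminus\{u,x\}$, observe that $\tilde f(y)=u'$ is now the leaf, verify that the result is a quasipacking with unique conflicting edge $yx$, and then invoke Part~\ref{lemma-partition} to get $N_G[y]=N_G[x]=Q$. The ``delicate bookkeeping'' you flag is handled via Lemma~\ref{lemma}(\ref{property:links}): a new conflict at $u$ after the swap would require an $F$-edge from $y$ to some $w\in N_G(u)\setminus\{x,y\}$, but then $w$ would carry two links from $u$ in $f$ (a $GF$-link through $y$ and an $FG$-link through $x$, the latter because $w\in N_G(x)$ by Part~\ref{lemma-partition}), contradicting uniqueness.

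For Part~\ref{lemma-partition} there is a genuine gap. First a minor correction: the $ux$-swap does \emph{not} dissolve the conflict; $ux$ remains the unique conflicting edge, and the swap merely interchanges the roles of $u,x$ and of $A:=N_G(u)\setminus N_G[x]$ versus $C:=N_G(x)\setminus N_G[u]$. More substantially, re-running the link count from the other endpoint does not by itself force $A=C=\emptyset$. The paper partitions the vertex set into $\{u,x\},A,B,C$ and their $F$-neighborhoods, uses the unique-link condition of Lemma~\ref{lemma}(\ref{property:links}) to pin down exactly where the $F$-edges incident to $A$ (and, after the swap, to $C$) can go, and then---this is the missing idea---invokes that $F$ is a \emph{forest}: assuming $A\neq\emptyset$, one deduces that every vertex of $A\cup C\cup N_F(f(A))$ has $F$-degree at least $2$ with all those $F$-edges staying inside this set, which is impossible in an acyclic graph. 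Your sketch never appeals to acyclicity; the equality conditions of Lemma~\ref{lemma}, even applied symmetrically after the $ux$-swap, constrain the picture but do not close it without this structural step.
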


\begin{proof}
Proof of part \ref{lemma-partition}.

Let $A=N_G(u)-N_G[x]$, $B=N_G(u)\cap N_G(x)$, $C=N_G(x)-N_G[u]$.  Also, let $N_A=N_F(f(A))$, $N_B=N_F(f(B))$, $N_C=N_F(f(C))$, and $N_x=N_F(x')$.

We will show that $A=N_A=N_C=C=\emptyset$.

\begin{figure}[h]
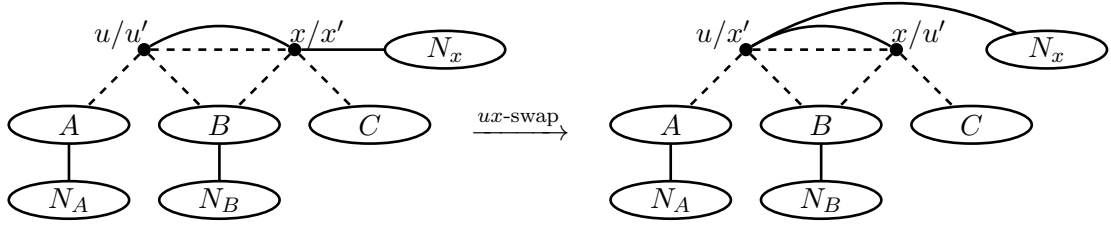

\centering
\begin{graph}
\node[vertex] (u) at (-1,0) {};
\node[vertex] (x) at (1,0) {};
\node (A) at (-2,-1) {};   \node (N_A) at (-2, -2) {};
\node (B) at (0,-1) {};    \node (N_B) at (0,-2) {};
\node (C) at (2,-1) {};    \node (N_x) at (3,0) {};
\draw[G] (u)--(x);  \draw[F] (u) to [bend left] (x);
\draw[G] (u)--(A);  \draw[G] (u)--(B);  \draw[G] (x)--(B);  \draw[G] (x)--(C);
\draw[F] (x)--(N_x);
\draw[F] (A)--(N_A);
\draw[F] (B)--(N_B);
\foreach \i in {A,N_A,B,N_B,C,N_x}
{
   \draw[fill=white] (\i) ellipse [x radius=0.8, y radius=0.25];
   \node at (\i) {$\i$};
}
\node at ($(u)+(-0.3,0.2)$) {$u$/$u'$};
\node at ($(x)+(0.3,0.2)$) {$x$/$x'$};
\begin{scope}[shift={(4,0)}]
\node at (0,-1) {$\xrightarrow{\text{$ux$-swap}}$};
\end{scope}
\begin{scope}[shift={(8,0)}]
\node[vertex] (u) at (-1,0) {};
\node[vertex] (x) at (1,0) {};
\node (A) at (-2,-1) {};   \node (N_A) at (-2, -2) {};
\node (B) at (0,-1) {};    \node (N_B) at (0,-2) {};
\node (C) at (2,-1) {};    \node (N_x) at (3,0) {};
\draw[G] (u)--(x);  \draw[F] (u) to [bend left] (x);
\draw[G] (u)--(A);  \draw[G] (u)--(B);  \draw[G] (x)--(B);  \draw[G] (x)--(C);
\draw[F] (u) to [bend left] (N_x);
\draw[F] (A)--(N_A);
\draw[F] (B)--(N_B);
\foreach \i in {A,N_A,B,N_B,C,N_x}
{
   \draw[fill=white] (\i) ellipse [x radius=0.8, y radius=0.25];
   \node at (\i) {$\i$};
}
\node at ($(u)+(-0.3,0.2)$) {$u$/$x'$};
\node at ($(x)+(0.3,0.2)$) {$x$/$u'$};
\end{scope}
\end{graph}
\caption{Left: the quasipacking $f$, with $G$-edges dashed and $F$-edges solid.  Right: the result after the $ux$-swap.}
\label{figure:init}
\end{figure}

Note that $B\cup C\cup \{u\}$ is precisely the set of vertices with an $FG$-link from $u$.
By Lemma~\ref{lemma}(\ref{property:links}), there are no $F$-edges from $A$ to $x$, else $x$ would have a $GF$-link; and there are no $F$-edges from $A$ to $B\cup C\cup N_x$, else such an endpoint in $B\cup C\cup N_x$ would have two links.
So for each vertex of $A$ to have exactly one link, $F[f(A)]$ must be a perfect matching.
Furthermore, the $F$-edges incident to $A$ only have endpoints in $A\cup N_A$.
Each vertex of $N_A$ must have exactly one $F$-edge from $A$ (to have one link).  And by Lemma~\ref{lemma}(\ref{property:Flargedegs}), each vertex of $A$ has at least one $F$-neighbor in $N_A$.  Note that we thus have $|N_A|\geq|A|$.
The vertices of $N_A, N_B, N_{x}$ all have $GF$-links by definition, and to have exactly one, these sets must be disjoint.  Thus we have that $\{u,x\},A,B,C,N_A,N_B,N_{x}$ is a partition of $V(G)$.  See the left side of Figure~\ref{figure:init}.

Now perform a $ux$-swap.  In Figure~\ref{figure:init}, we visualize with $V(G)$ fixed, so just the $F$-edges adjacent to $u'$ and $x'$ move; roughly speaking, we just interchange the roles of $u$ and $x$ and those of $A$ and $C$.
The result is again a quasipacking with $ux$ the only conflicting edge.  The $F$-neighbors of $u$ are precisely $N_{x}$.
Repeating the arguments of the last paragraph, for each vertex of $N_C$ to have exactly one link, we must have $|N_C|\geq|C|$.
Suppose that $A\neq\emptyset$.  Then since $|N_A|\geq|A|$, $N_A\neq\emptyset$ as well.
Now, the only possible links (from $x$) to vertices in $N_A$ are $GF$-links through $C$; hence $N_C=N_A$, and the $F$-edges incident to $C$ have endpoints in $C\cup N_A$.
Furthermore, since $N_C=N_A\neq\emptyset$, $C\neq\emptyset$ as well; so each vertex of $N_A$ has $F$-degree at least 2 (one edge from $A$ and one from $C$).
But also, by Lemma~\ref{lemma}(\ref{property:Flargedegs}) applied to the original and also this new quasipacking, every vertex of $A$ and $C$ has $F$-degree at least 2, with $F$-edges entirely in $A\cup C\cup N_A$.
So $F[f(A\cup C\cup N_A)]$ has minimum degree at least 2, contradicting that it is a forest, unless $A=C=N_A=\emptyset$.  Note that this implies that $N_G[u]=N_G[x]=\{u,x\}\cup B$.

Proof of part~\ref{lemma-clique}.

This time perform a $uy$-swap for some vertex $y\in Q\setminus\{u,x\}$ to get $\tilde{f}$.  The result is again a quasipacking with $yx$ the only conflicting edge, with $\tilde{f}(y)=u'$.  By part~\ref{lemma-partition}, $N_G[y]=N_G[x]=Q$.  Since this holds for every $y\in Q\setminus\{u,x\}$, we have that $Q$ is a clique; and since $\deg_G(x)=\Delta(G)$ by Lemma~\ref{lemma}(\ref{property:edegs}), $G[Q]$ is a clique component of $G$.
\end{proof}

Let $u'$ be a leaf in $F$, and let $x'$ be its neighbor.  Consider a quasipacking $f$ of $G$ with $F$ with $f(u)=u'$ and $f(x)=x'$ and conflicting edge $ux$.  (Such exists by the extremal choice of $F$, as in the proof of Theorem~\ref{thm:degSS}.)

Let $G[Q]$ be the clique component of $G$ given in Lemma~\ref{lemma-parts}(\ref{lemma-clique}).
Let $z$ be a vertex of $Q$ with smallest $F$-degree larger than 1 (such a choice is possible, as $\deg_F(x')\geq2$ by Lemma~\ref{lemma}(\ref{property:Flargedegs})), and let $z'=f(z)$.
Let $z_1, z_2 \in V(G)$ be two $F$-neighbors of $z$.

In $f$, $z_1$ and $z_2$ each have
exactly one $F$-edge into $Q$ and at most one other $F$-edge, by Lemma~\ref{lemma}(\ref{property:links},\ref{property:Fsmalldegs}).
So $z_1, z_2$ have no $F$-neighbors inside $Q$ except $z$.
From this and that $Q$ is a $G$-clique in the quasipacking,
the set $Q\cup\{z_1,z_2\}\setminus\{z\}$ is $F$-independent (whether $z=x$ or not) except perhaps the conflicting edge $ux$.
So $Q\cup\{z_1,z_2\}\setminus\{u,z\}$ is $F$-independent.
Let $X=f(Q\cup\{z_1,z_2\}\setminus\{u,z\})$.

Let $g: V(G)\to V(F)$ be a bijection such that $g(Q)=X$.
Since $G[Q]$ is a clique component and $X$ is independent, $g$ is a packing if and only if $g|_{G-Q}$ is a packing of $G-Q$ with $F-X$.

\smallskip
\noindent\textbf{Claim}:  $\deg_F(z')\geq4$.\\
Suppose to the contrary that $\deg_F(z')\leq 3$.  We have taken two of the neighbors of $z$ into $X$, so $\deg_{F-X}(z')\leq 1$.  And $z'$ is the only vertex of $F-X$ that may have degree larger than 2, by Lemma~\ref{lemma}(\ref{property:Fsmalldegs}).  That is, $F-X$ is a linear forest.  We have that
$ \Delta(G-Q)\leq \Delta(G)\leq \frac n3$, so
\begin{align*}
 \delta(\overline{G-Q})  &= |V(G-Q)|-1 - \Delta(G-Q) \\
  &= n-(\Delta(G)+1) -1 -\Delta(G-Q) \\
  &\geq n-\frac32\Delta(G)-\frac12\Delta(G)-2 \\
  &\geq \frac12n-\frac12\Delta(G)-2 \\
  &= \frac{1}{2}|V(G-Q)|,
\end{align*}
and so Dirac's condition for Hamiltonicity applies (\cite{Dirac}).  Since $\overline{G-Q}$ contains a Hamiltonian cycle, it also contains the linear forest $F-X$, i.e.~$F-X$ and $G-Q$ pack, a contradiction.  This completes the proof of the Claim.
\smallskip

This Claim, together with having $z'\notin X$ but its two neighbors $z_1, z_2\in X$, gives us the inequality
\begin{align*}
\ell^*(F-X) &= \sum_{v\in V(F-X)} \max\{\deg_{F-X}(v)-2, 0\} \\
 &\leq -2 + \sum_{v\in V(F-X)} \max\{\deg_{F}(v)-2, 0\} \\
 &= -2 + \sum_{v\in V(F)} \max\{\deg_{F}(v)-2, 0\} - \sum_{v\in X} \max\{\deg_{F}(v)-2, 0\} \\
 &= -2 + \ell^*(F) - \sum_{v\in X}\max\{\deg_F(v)-2, 0\}.
\end{align*}

From Lemma~\ref{lemma}(\ref{property:Flargedegs}), every vertex of $f(Q-u)$ has $F$-degree at least two; and since $z'$ was chosen to have smallest $F$-degree among the non-leaves of $f(Q)$, the Claim gives that they must in fact have degree at least four.
All these vertices except $z'$ are in $X$, so we have at least $\Delta(G)-1$ vertices of $X$ with degree at least 4.  Hence
$\displaystyle 2+\sum_{v\in X}\max\{\deg_F(v)-2, 0\}\geq 2\Delta(G)>\Delta(G)+1$, so

\begin{align*}
3\Delta(G-Q)+\ell^*(F-X)
&\leq 3\Delta(G)+\ell^*(F) - 2-\sum_{v\in X}\max\{\deg_F(v)-2, 0\} \\
&= n - 2-\sum_{v\in X}\max\{\deg_F(v)-2, 0\} \\
&<n-\Delta(G)-1 \\
&= |V(G-Q)|.
\end{align*}
Thus, by Theorem~\ref{thm:degSS}, $G-Q$ and $F-X$ pack, a contradiction.  This completes the proof of Theorem~\ref{thm:onlysharpness}.
\hfill\qedsymbol
\vspace*{0.5cm}

{\bf Acknowledgment.} The authors thank the anonymous referees for their helpful suggestions for improving the exposition. 

\small
\bibliographystyle{abbrv}
\bibliography{GTpacking}
\end{document}